\def\id{\mathop{\fam0 id}}
\def\t{\widetilde}
\def\R{{\mathbb R}} \def\Z{{\mathbb Z}}
\long\def\comment#1\endcomment{}
\newcommand{\jonly}[1]{}
\newcommand{\aronly}[1]{#1}
    \theoremstyle{theorem}
         \newtheorem{theorem}{Theorem}
         \newtheorem{lemma}[theorem]{Lemma}
         \newtheorem{corollary}[theorem]{Corollary}
    \theoremstyle{definition}
         \newtheorem{remark}[theorem]{Remark}
\begin{document}

\title{On embeddability of joins and their `factors'}


\author{S. Parsa and A. Skopenkov}

\thanks{S. Parsa: DePaul University, Chicago, IL., USA. \texttt{s.parsa@depaul.edu}. A. Skopenkov: \texttt{https://users.mccme.ru/skopenko}.
Independent University of Moscow, Moscow Institute of Physics and Technology, Russian Federation.  \texttt{skopenko@mccme.ru}.
\newline
We are grateful to S. Melikhov and the anonymous referee for useful suggestions.
\newline
MSC: 57Q35, 55Q91}


\date{}

\maketitle

\abstract
We present a short and clear proof of the following particular case of a 2006 result of Melikhov-Shchepin.
{\it Let $K$ be a $k$-dimensional simplicial complex and $K*[3]$ the union of three cones over $K$ along their common bases.
If $2d\ge3k+3$ and $K*[3]$ embeds into $\R^{d+2}$, then $K$ embeds into $\R^d$.}
We also present a generalization of this theorem.
The proofs are based on the Haefliger-Weber `configuration spaces' embeddability criterion,
equivariant suspension theorem and simple properties of joins and cones.
\endabstract


\section{Introduction}

The interest in embeddability of complexes into Euclidean space (and in related problems) was recently revived, see survey \cite[\S3.2, \S3.3]{Sk18}, the references therein, and \cite{GS06, Pa15, Sk18o, Pa21, Me22}.
We present a short and clear
\jonly{proof of Theorem \ref{t:parsaj}}
\aronly{proofs of Theorems \ref{t:parsaj} and \ref{t:eq}.b}
which first appeared  in the unpublished paper
\cite[$(iv)\Rightarrow(i)$ of Corollary 4.4\aronly{, Theorem 4.5}]{MS06}.
\jonly{See also \cite[Theorem C]{Me22} and \cite[Theorem 3.b]{PS20}.}
We also prove Theorem \ref{t:eq}\aronly{.a} which is a generalization of Theorem \ref{t:parsaj}\aronly{, and is a version of Theorem \ref{t:eq}.b without any condition on the given embedding of the join $K*L$}.

We abbreviate `$k$-dimensional finite simplicial complex' to `$k$-complex'.

\begin{theorem}\label{t:parsaj}
Let $K$ be a $k$-complex and $K*[3]$ the union of three cones over $K$ along their common bases.
If $2d\ge3k+3$ and $K*[3]$ embeds into $\R^{d+2}$, then $K$ embeds into $\R^d$.
\end{theorem}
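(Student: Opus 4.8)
The plan is to combine the Haefliger--Weber criterion with the behaviour of deleted joins under taking joins, and then to desuspend using the equivariant suspension theorem. Write $[3]=\{p_1,p_2,p_3\}$ for the $0$-complex consisting of three points, so that $K*[3]=\bigcup_{i=1}^3(K*p_i)$ is the union of the three cones with apices $p_i$ over the common base $K$, and $\dim(K*[3])=k+1$. Throughout, the deleted join $N^{*2}_\Delta$ of a complex $N$ carries its free $\mathbb Z/2$-action swapping the two copies, every sphere $S^n$ carries the antipodal action, and $[X,Y]$ denotes the set of equivariant homotopy classes of maps. I use the Haefliger--Weber criterion in deleted-join form: if $2m\ge 3q+3$, then a $q$-complex $N$ embeds into $\R^m$ if and only if there is a $\mathbb Z/2$-map $N^{*2}_\Delta\to S^m$; the ``only if'' direction (producing the map from an embedding) needs no dimension restriction.

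First, from the given embedding $K*[3]\hookrightarrow\R^{d+2}$ and the easy direction of Haefliger--Weber, I obtain a $\mathbb Z/2$-map $(K*[3])^{*2}_\Delta\to S^{d+2}$. Next I use the join identity for deleted joins, $(A*B)^{*2}_\Delta\cong A^{*2}_\Delta*B^{*2}_\Delta$ --- equivariantly, the label-swapping involution on the left is the join of the label-swapping involutions on the right --- together with the computation $[3]^{*2}_\Delta\cong S^1$ with the antipodal action: indeed $[3]^{*2}_\Delta$ is the graph $K_{3,3}$ with a perfect matching removed, that is, a hexagon, on which label-swapping acts antipodally. This rewrites the map above as a $\mathbb Z/2$-map
$$K^{*2}_\Delta * S^1\ \longrightarrow\ S^{d+2}.$$
Since a join of antipodal spheres is again an antipodal sphere, $S^{d+2}=S^d*S^1$ equivariantly, so this map has the shape of a two-fold ``join-with-$S^1$'' suspension.

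The crux is the desuspension. I invoke the equivariant suspension theorem: for any free $\mathbb Z/2$-complex $X$ with $\dim X\le 2d-1$, the map
$$[X,\,S^d]\ \longrightarrow\ [X*S^1,\ S^d*S^1]=[X*S^1,\ S^{d+2}],\qquad f\mapsto f*\id_{S^1},$$
is surjective --- here $S^d$ is $(d-1)$-connected, so this is the usual metastable range for a double suspension, and for free $\mathbb Z/2$-actions the relevant equivariant obstruction-theoretic bounds agree with the non-equivariant ones. Applying this with $X=K^{*2}_\Delta$, which satisfies $\dim K^{*2}_\Delta\le 2k+1\le 2d-1$ by $2d\ge 3k+3$, the map of the previous paragraph is equivariantly homotopic to $f*\id_{S^1}$ for some $\mathbb Z/2$-map $f\colon K^{*2}_\Delta\to S^d$. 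Finally I feed $f$ into the hard direction of Haefliger--Weber: since $\dim K=k$ and $2d\ge 3k+3$, the $\mathbb Z/2$-map $f$ produces an embedding $K\hookrightarrow\R^d$, as required.

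I expect the desuspension step to be the main obstacle: one must pin down the precise form of the equivariant suspension theorem and verify that $\dim K^{*2}_\Delta$ lies in the range where the double ``join-with-$S^1$'' map is onto --- this is where $2d\ge 3k+3$ enters (through $2k+1\le 2d-1$), besides its sharp use in running Haefliger--Weber backwards for $K$. The remaining points are routine bookkeeping: the join identity for deleted joins, the identification of $[3]^{*2}_\Delta$, and the check that the homeomorphisms $(K*[3])^{*2}_\Delta\cong K^{*2}_\Delta*S^1$ and $S^{d+2}\cong S^d*S^1$ are equivariant for the antipodal/label-swapping actions, so that the suspension theorem applies verbatim.
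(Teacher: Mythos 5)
Your proof is correct and follows essentially the same route as the paper: extract a $\Z_2$-map $(K*[3])^{*2}_\Delta\to S^{d+2}$ from the embedding, identify $(K*[3])^{*2}_\Delta\cong K^{*2}_\Delta*[3]^{*2}_\Delta\cong K^{*2}_\Delta*S^1\cong\Sigma^2K^{*2}_\Delta$ (with the hexagon $[3]^{*2}_\Delta\cong S^1$), desuspend twice by the Conner--Floyd equivariant suspension theorem using $2d\ge3k+3$, and conclude by Haefliger--Weber. The only (cosmetic) difference is that you invoke the deleted-\emph{join} form of the Haefliger--Weber criterion as a black box at both ends, whereas the paper works with the deleted-product form and explicitly supplies the product-to-join translation (its Lemma~\ref{l:steps}.e, valid for $d\ge k+1$, which your hypotheses guarantee).
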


This improves in a sense the well-known Flores example that the join $[3]^{*k-1}$ of $k-1$ copies of the 3-point set $[3]$ does not embed into $\R^{2k}$.

\begin{corollary}\label{c:links}
If $2d\ge3k+3$ and a $(k+1)$-complex $P$ embeds into $\R^{d+2}$, then the triple intersection of links of any three vertices of $P$ is a $k$-complex embeddable into $\R^d$.
\end{corollary}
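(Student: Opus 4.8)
\emph{Proof plan.} I would derive this directly from Theorem \ref{t:parsaj}, by exhibiting $K*[3]$ as a subcomplex of $P$. Write $v_1,v_2,v_3$ for the three (distinct) vertices and take all links in $P$, so that $K=\lk v_1\cap\lk v_2\cap\lk v_3$. By definition of link, a simplex $\sigma$ lies in $K$ exactly when $\sigma$ contains none of $v_1,v_2,v_3$ and $v_i*\sigma\in P$ for $i=1,2,3$. First I would record the dimension count: for $\sigma\in K$ we have $v_1*\sigma\in P$, hence $\dim\sigma\le\dim P-1\le k$, so $\dim K\le k$.

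Next I would identify the abstract join $K*[3]$ with the subcomplex $L:=K\cup(v_1*K)\cup(v_2*K)\cup(v_3*K)$ of $P$. Its vertex set is $V(K)\sqcup\{v_1,v_2,v_3\}$, the disjointness being immediate since simplices of $K$ avoid $v_1,v_2,v_3$. As the $0$-complex $[3]$ has no edges, the simplices of $K*[3]$ are precisely the simplices of $K$ together with the sets $\{v_i\}\cup\sigma$ for $\sigma\in K\cup\{\varnothing\}$ and $i\in\{1,2,3\}$; the vertex bijection that is the identity on $V(K)$ and sends the three points of $[3]$ to $v_1,v_2,v_3$ carries these onto the simplices of $L$, giving a simplicial isomorphism $K*[3]\cong L$. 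That $L$ is closed under faces is the one routine check: a face of $v_i*\sigma$ is either a face of $\sigma$, hence in $K$, or has the form $v_i*\tau$ with $\tau$ a face of $\sigma$, hence in $v_i*K$.

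Finally, restricting a given embedding $P\hookrightarrow\R^{d+2}$ to $L\cong K*[3]$ produces an embedding $K*[3]\hookrightarrow\R^{d+2}$; since $2d\ge3k+3\ge6$, Theorem \ref{t:parsaj} then gives an embedding $K\hookrightarrow\R^d$, as claimed. (If one insists that a ``$k$-complex'' be equidimensional, apply the theorem with $\dim K$ in place of $k$ when $1\le\dim K<k$, using $2d\ge3k+3\ge3\dim K+3$, and note the conclusion is trivial when $\dim K\le0$ since then $d\ge3$.) I do not expect any genuine obstacle here: the only points deserving care are that the factor $[3]$ is $0$-dimensional --- so that no condition on the edges $v_iv_j$ of $P$ is needed --- and the elementary verification that $L$ is an honest subcomplex.
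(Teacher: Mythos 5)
Your proposal is correct and follows exactly the paper's (one-line) argument: the paper proves Corollary~\ref{c:links} by observing that $P$ contains $K*[3]$ and invoking Theorem~\ref{t:parsaj}. You have merely filled in the routine verifications (that $K\cup v_1*K\cup v_2*K\cup v_3*K$ is a subcomplex of $P$ isomorphic to $K*[3]$, and that $\dim K\le k$), which the paper leaves implicit.
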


This follows by Theorem \ref{t:parsaj} because $P$ contains the join of $[3]$ and the triple intersection.

Denote by
$$Y^{\times2}_\Delta:=\{(x,y)\in Y\times Y\ :\ x\ne y\}\quad\text{and}
\quad Y^{*2}_\Delta:=\{[(x,y,t)]\in Y*Y\ :\ x\ne y\}$$
the deleted product and the deleted join of a complex $Y$.
Consider the antipodal involution on $S^m$ and the involutions $(x,y)\leftrightarrow(y,x)$ and $[(x,y,t)]\leftrightarrow[(y,x,t)]$ on these spaces.

\begin{theorem}\label{t:eq} Let $K$ and $L$ be a $k$-complex and a complex.
Assume that $2d\ge3k+3$, $g:K*L\to\R^{d+q+1}$ is an embedding and \aronly{either

(a)} there is a $\Z_2$-equivariant map $\varphi:S^q\rightarrow L^{*2}_{\Delta}$\aronly{, or

(b) there is a $\Z_2$-equivariant map $\psi:S^{q-1}\rightarrow L^{\times2}_{\Delta}$ and $g$ is {\it level-preserving}, i.e. $g([x,y,t])\subset\R^{d+q}\times t$ for any $x\in K$, $y\in L$ and $t\in[0,1]$}.


Then $K$ embeds into $\R^d$.
\end{theorem}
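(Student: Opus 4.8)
The plan is to deduce both parts of Theorem~\ref{t:eq} from the Haefliger--Weber criterion, which says that a $k$-complex $K$ embeds into $\R^d$ whenever $2d\ge3k+3$ and there is a $\Z_2$-equivariant map $\widetilde{K}\to S^{d-1}$, where $\widetilde K=K^{\times2}_\Delta$ is the deleted product with the coordinate-swap involution. (Equivalently one may use the deleted join $K^{*2}_\Delta$ and a $\Z_2$-map to $S^d$; in the metastable range $2d\ge3k+3$ these formulations agree.) So the whole proof reduces to manufacturing such an equivariant map out of the given embedding $g$ of $K*L$ together with the map $\varphi$ (resp.\ $\psi$).

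The key construction is a natural $\Z_2$-map relating the deleted join of a join to the joins of the deleted joins. Concretely, for complexes $K$ and $L$ there is a $\Z_2$-equivariant map
$$
(K*L)^{*2}_\Delta \longleftarrow K^{*2}_\Delta * L^{*2}_\Delta,
$$
or more usefully in the other direction one shows $K^{*2}_\Delta$ is an equivariant retract-like piece sitting inside $(K*L)^{*2}_\Delta$, so that an equivariant map out of $(K*L)^{*2}_\Delta$ restricts to one out of $K^{*2}_\Delta$. From the embedding $g:K*L\to\R^{d+q+1}$ one gets, by the standard construction (the map $(u,v)\mapsto (g(u)-g(v))/|g(u)-g(v)|$ on the deleted product, or its join analogue), a $\Z_2$-equivariant map $(K*L)^{*2}_\Delta\to S^{d+q}$. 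Restricting along the inclusion above yields a $\Z_2$-map $K^{*2}_\Delta * L^{*2}_\Delta \to S^{d+q}$, i.e.\ a map of the form $\widetilde K * (\text{something over }L)\to S^{d+q}$. In case~(a) the hypothesis gives $S^q\to L^{*2}_\Delta$, so we may precompose on the $L$-factor to obtain a $\Z_2$-map $\widetilde K * S^q \to S^{d+q}$; in case~(b), using level-preservation of $g$ one works with deleted products throughout and gets $\widetilde K * S^{q-1}\to S^{d+q-1}$ (the level condition is exactly what lets one replace the $(d+q+1)$-sphere by the lower-dimensional $S^{d+q-1}$ coming from the $\R^{d+q}$-slices). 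Since $\widetilde K * S^q \cong \Sigma^{q+1}\widetilde K$ equivariantly (with the antipodal action on $S^q$ and the diagonal/join action), we have produced an equivariant map $\Sigma^{q+1}\widetilde K\to S^{d+q}$.

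Now the equivariant suspension theorem (the $\Z_2$-equivariant Freudenthal theorem) is invoked to desuspend: a $\Z_2$-map $\Sigma^{q+1}\widetilde K\to S^{q+1}\wedge S^{d-1}=S^{d+q}$ desuspends to a $\Z_2$-map $\widetilde K\to S^{d-1}$, provided the dimensions and connectivities are in the stable range. Here $\widetilde K$ has dimension $\le 2k$, the target $S^{d-1}$ is $(d-1)$-connected as a $\Z_2$-space in the relevant sense (the free antipodal sphere), and the inequality $2d\ge3k+3$, equivalently $2k\le 2d-k-3$, is precisely what guarantees $\dim\widetilde K \le 2(d-1)-(k+1)$-type bound needed for the equivariant Freudenthal suspension isomorphism/surjection after $q+1$ suspensions. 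Feeding the resulting $\Z_2$-map $\widetilde K\to S^{d-1}$ back into the Haefliger--Weber criterion (again using $2d\ge3k+3$) gives an embedding $K\hookrightarrow\R^d$, which is the conclusion.

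The main obstacle I expect is bookkeeping in the two reductions: first, pinning down the precise natural $\Z_2$-map between the deleted join of $K*L$ and the join $K^{*2}_\Delta * L^{*2}_\Delta$ (and checking it behaves correctly on the $x\ne y$ conditions, which subdivide into the several cases coming from whether the two join-coordinates land in the $K$-part, the $L$-part, or straddle), and second, verifying that the numerical hypotheses put us inside the range where the equivariant suspension theorem applies in its surjectivity form (we only need surjectivity onto a given map's class, not an isomorphism). Case~(b) carries the extra subtlety that level-preservation must be used to stay with deleted \emph{products} and the sphere $S^{d+q-1}$ rather than $S^{d+q}$; I would isolate that as a short separate lemma so the two cases run in parallel afterwards. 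Everything else---the construction of equivariant maps from embeddings, the identification $\widetilde K*S^q\simeq_{\Z_2}\Sigma^{q+1}\widetilde K$, and the final appeal to Haefliger--Weber---is routine.
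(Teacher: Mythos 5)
Your proposal follows essentially the same route as the paper's proof: a Gauss-type equivariant map obtained from the embedding on the deleted join $(K*L)^{*2}_\Delta\cong K^{*2}_\Delta*L^{*2}_\Delta$ (deleted products of the level slices in case (b)), precomposition with $\id*\varphi$ (resp.\ $\id*\psi$), the identification $K^{*2}_\Delta*S^q\cong\Sigma^{q+1}K^{*2}_\Delta$, equivariant desuspension via Conner--Floyd, and finally Haefliger--Weber. The only blemishes are bookkeeping items you yourself flag --- chiefly an off-by-one in the sphere dimensions where you conflate $K^{\times2}_\Delta$ with $K^{*2}_\Delta$, and the unproved (but true and known) passage between $\pi^d_{\Z_2}(K^{*2}_\Delta)\ne\emptyset$ and $\pi^{d-1}_{\Z_2}(K^{\times2}_\Delta)\ne\emptyset$, which the paper handles via its Lemma \ref{l:steps}.e on cones and deleted joins.
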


\section{Proofs}

Theorem \ref{t:parsaj} follows by Theorem \ref{t:eq}\aronly{.a} or by (parts (a,d,f,b) of) the following Lemma \ref{l:steps} (because for $k=0$ Theorem \ref{t:parsaj} is trivial, and for $k\ge1$ we have $3k+3\ge2(k+2)$).

In the rest of this paper we replace $Y^{\times2}_\Delta$ and $Y^{*2}_\Delta$ by their $\Z_2$-equivariantly homotopy equivalent simplicial versions \cite[\S5]{Ma03}.

Let $X$ be a complex with an involution $\sigma:X\to X$.
Denote by $\pi^m_{\Z_2}(X)$ the set of $\Z_2$-equivariant maps $X\to S^m$.
Consider the involution on
$\Sigma X=\frac{X\times[-1,1]}{X\times\{-1\},X\times1}$ defined by $[(x,t)]\mapsto[(\sigma(x),-t)]$.

\begin{lemma}\label{l:steps} Let $K$ be a $k$-complex.

(a) If $K$ embeds into $\R^d$, then $\pi^{d-1}_{\Z_2}(K^{\times2}_\Delta)\ne\emptyset$.

(b) (Weber theorem) If $\pi^{d-1}_{\Z_2}(K^{\times2}_\Delta)\ne\emptyset$ and $2d\ge3k+3$, then $K$ embeds into $\R^d$.

(c) (equivariant suspension theorem) The equivariant suspension
$$\Sigma:\pi_{\Z_2}^{m-1}(X) \to \pi_{\Z_2}^m(\Sigma X)$$
is a 1--1 correspondence for $\dim X\le2m-4$ and is surjective for $\dim X\le2m-3$.

(d) There is an equivariant surjective map
$q:K^{*2}_\Delta\to \Sigma(K^{\times2}_\Delta)$
whose only non-trivial preimages are those of the vertices of the suspension and are homeomorphic to $K$.

(e) There is a $\Z_2$-equivariant homeomorphism
$h:(K*[3])^{*2}_\Delta \to \Sigma^2 K^{*2}_\Delta$.

(f) The maps
$$\pi^{d-1}_{\Z_2}(K^{\times2}_\Delta) \overset{q^*\Sigma}\to \pi^d_{\Z_2}(K^{*2}_\Delta) \overset{h^*\Sigma^2}\to \pi^{d+2}_{\Z_2}((K*[3])^{*2}_\Delta)$$
are 1--1 correspondences for $d\ge k+2$ and are surjective for $d\ge k+1$.
\end{lemma}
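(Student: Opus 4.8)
The plan is to dispatch the six assertions in order; (a)--(d) are one-line constructions or citations and (e)--(f) are short consequences of them. For (a): if $f\colon K\to\R^d$ is an embedding, then $(x,y)\mapsto(f(x)-f(y))/|f(x)-f(y)|$ is a well-defined continuous map $K^{\times2}_\Delta\to S^{d-1}$, equivariant for the coordinate swap and the antipodal involution, and hence represents an element of $\pi^{d-1}_{\Z_2}(K^{\times2}_\Delta)$. For (b) I would quote the Haefliger--Weber theorem: in the metastable range $2d\ge3k+3$ the deleted-product obstruction is complete, so a $\Z_2$-map $K^{\times2}_\Delta\to S^{d-1}$ upgrades to an embedding $K\hookrightarrow\R^d$ (passing first to the simplicial model of the deleted product for the piecewise-linear statement). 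For (c) I would quote the equivariant suspension theorem, a $\Z_2$-equivariant Freudenthal theorem, proved by equivariant obstruction theory for maps of free $\Z_2$-complexes to $S^m$, equivalently by ordinary obstruction theory on the quotients with the twisted integer coefficients of the double cover $X\to X/\Z_2$.

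For (d), write $\con K=K*\{c\}$. In the \emph{simplicial} deleted product $(\con K)^{\times2}_\Delta$ the cone vertex $c$ can belong to at most one of the two factor-simplices of any cell, so for each $(P,Q)\in(\con K)^{\times2}_\Delta$ at least one of the heights $h(P),h(Q)\in[0,1]$ of $P,Q$ over $K$ (with $h=0$ on $K$ and $h=1$ at $c$) is zero. Define $p(P,Q)=[(\beta(P),\beta(Q)),\,h(P)-h(Q)]\in\Sigma(K^{\times2}_\Delta)$, where $\beta$ is radial projection to $K$ and $h(P)-h(Q)\in[-1,1]$ serves as the suspension coordinate; when this coordinate is $\pm1$, i.e.\ $P=c$ or $Q=c$, the value is the corresponding suspension vertex (and $\beta$ is then irrelevant). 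A direct check shows $p$ is continuous, equivariant and surjective, with $p^{-1}(+)=c\times K$, $p^{-1}(-)=K\times c$, and $p$ injective elsewhere; this is (d).

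For (e), I would send a $\Z_2$-map $\phi\colon K^{\times2}_\Delta\to S^{d-1}$ to its \emph{join-suspension} $\bar\phi\colon K^{*2}_\Delta\to S^d$, defined by $\bar\phi([(x,y,t)])=[\phi(x,y),t]\in\Sigma S^{d-1}=S^d$ for $t\in(0,1)$ and by the two poles of $S^d$ on the two copies $K_1,K_2\subset K^{*2}_\Delta$ of $K$; this is continuous, equivariant and compatible with homotopies, so it induces the asserted map, which equals $q^{*}\circ\Sigma$, where $\Sigma$ is the equivariant suspension of (c) and $q\colon K^{*2}_\Delta\to\Sigma(K^{\times2}_\Delta)$, $[(x,y,t)]\mapsto[(x,y),t]$, is the $\Z_2$-map collapsing $K_1,K_2$ to the two suspension vertices (equivalently $q$ is the quotient $K^{*2}_\Delta\to K^{*2}_\Delta/(K_1\cup K_2)$). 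The ranges come from (c) applied to $K^{\times2}_\Delta$, of dimension $\le2k$, together with an obstruction-theoretic analysis of $q^{*}$: a $\Z_2$-map $K^{*2}_\Delta\to S^d$ can, by the equivariant homotopy extension property, be $\Z_2$-homotoped to one constant (equal to the two poles) on the $k$-dimensional subcomplex $K_1\cup K_2$ whenever $S^d$ is $(d-1)$-connected and $d\ge k+1$, hence into the image of $q^{*}$, and the residual obstruction to landing in the image of $\bar\phi$ is absorbed because $q$ collapses only these $k$-dimensional walls; running the same argument one dimension higher on a homotopy between two join-suspensions rel ends (the relevant subcomplex now being $(K_1\cup K_2)\times[0,1]$, of dimension $k+1$) gives injectivity for $d\ge k+2$.

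For (f), I would use the standard identities $(K*L)^{*2}_\Delta\cong_{\Z_2}K^{*2}_\Delta*L^{*2}_\Delta$ (the join distributes over the deleted join) and $[3]^{*2}_\Delta\cong_{\Z_2}S^1$ (the deleted join of three points is the hexagonal circle with the antipodal action). Hence $(K*[3])^{*2}_\Delta\cong_{\Z_2}K^{*2}_\Delta*S^1$, which is the twofold equivariant suspension of $K^{*2}_\Delta$, so the map in (f) is the composite of two equivariant suspensions (c) applied to $X=K^{*2}_\Delta$, of dimension $\le2k+1$; feeding $\dim X\le2k+1$ into the first and $\dim\Sigma X\le2k+2$ into the second gives bijectivity for $d\ge k+2$ and surjectivity for $d\ge k+1$. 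I expect the only genuine work to be the obstruction-theoretic bookkeeping in (e): pinning down, equivariantly, when a given map $K^{*2}_\Delta\to S^d$ is homotopic to a join-suspension — in particular why the two $k$-dimensional copies of $K$ can always be compressed to the suspension poles — which is precisely what forces the thresholds $d\ge k+1$ and $d\ge k+2$; everything else is a citation or the routine verification that the maps written down are well defined, continuous and equivariant.
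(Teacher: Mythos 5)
Your proposal is correct, and for parts (a), (b), (c) and (f) it coincides with the paper's proof: (a) is the normalized-difference (Gauss) map, (b) and (c) are the citations of Weber and of Conner--Floyd, and (f) is exactly the double equivariant suspension after the identifications $(K*[3])^{*2}_\Delta\cong K^{*2}_\Delta*[3]^{*2}_\Delta\cong K^{*2}_\Delta*S^1\cong\Sigma^2K^{*2}_\Delta$, with the same dimension counts. For (d) you construct $p$ explicitly, whereas the paper only cites the Cone Lemma of [Sk02]; your construction is a correct proof of that lemma. The one genuine divergence is (e). The paper factors its map as $J\circ p^*\circ\Sigma$ through the deleted product of the cone: after suspending, it pulls back along the $p$ of (d) to $\pi^d_{\Z_2}((\con K)^{\times2}_\Delta)$ and then applies the bijection $J$ induced by the $\Z_2$-homotopy equivalence $(\con K)^{\times2}_\Delta\simeq K^{*2}_\Delta$ quoted from Matou\v sek. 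You instead collapse the two end copies $K_1,K_2$ of $K$ in the deleted join by $q:K^{*2}_\Delta\to\Sigma(K^{\times2}_\Delta)$ and take $q^*\circ\Sigma$, bypassing both the intermediate space and the citation of $J$. The substance is the same: both quotient maps collapse a swapped pair of $k$-dimensional subcomplexes ($c\times K$ and $K\times c$ there, $K_1$ and $K_2$ here), and surjectivity resp.\ injectivity of the induced map on $\pi^d_{\Z_2}$ is obtained by compressing the restriction to these subcomplexes to the two poles, using that maps of $k$-complexes (resp.\ of $(k+1)$-dimensional homotopies) into $S^d$ are null-homotopic for $d\ge k+1$ (resp.\ $d\ge k+2$) --- precisely the paper's ``nice map'' argument for $p^*$. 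Your route is slightly more economical; the paper's keeps every step an off-the-shelf citation. One loose end you share with the paper: for the $\Sigma$ factor in (e), part (c) applied to the (up to) $2k$-dimensional $K^{\times2}_\Delta$ yields surjectivity only when $2k\le2d-3$, i.e.\ $d\ge k+2$, so the claimed surjectivity of (e) at $d=k+1$ does not literally follow from (c) in either write-up; this does not affect the deduction of Theorem 1, which only uses $d\ge k+2$.
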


\begin{proof}
Of this lemma parts (a,b,c) are known, parts (d,e) are implicitly known, and part (f) easily follows from (c,d,e)
(see details and references below).

Part (a) follows because if $f:K\to\R^d$ is an embedding, then an equivariant map $\t f:K^{\times2}_\Delta\to S^{d-1}$ is defined by $\t f(x,y):=\dfrac{fx-fy}{|fx-fy|}$.

Part (b) is the non-trivial main result of \cite{We67}, see also the survey
\cite[\S5]{Sk06}.

Part (c) is  \cite[Theorem 2.5]{CF60}, see also \cite[Theorem 2.5]{Sk02}.

Part (d) is proved by setting $q[(x,y,t)]:=[(x,y,2t-1)]$.

Part (d) is also the conjunction of \cite[Exercise 4 to \S5.5]{Ma03} and a part of \cite[Cone Lemma 4.2.1]{Sk02}
(this part of \cite[Cone Lemma 4.2]{Sk02} is easy, \aronly{could have been known in folklore before \cite{Sk02},} and is essentially reproved in \cite[Lemma 6.1]{Me22}).

Part (e) proved by taking $\Z_2$-equivariant homeomorphisms
$$(K*[3])^{*2}_\Delta \cong K^{*2}_\Delta*[3]^{*2}_\Delta \cong K^{*2}_\Delta*S^1 \cong \Sigma^2 K^{*2}_\Delta,\quad\text{of which}$$

$\bullet$ the first holds by  \cite[Lemma 5.5.2]{Ma03} and is given by
$\left[\left([(x,i,s)],[(y,j,t)],\lambda\right)\right]\mapsto\left[\left([(x,y,s)],[(i,j,t)],\lambda\right)\right]$;

$\bullet$ the second holds because $[3]^{*2}_\Delta$ is the graph $K_{3,3}=[3]*[3]$ without the edges $(i,i')$, and so is the cycle of length 6 \cite[\S5]{Ma03};

$\bullet$ the third is well-known.

In order to prove (f), by (c,e) it suffices to prove that
$q^*: \pi^d_{\Z_2}(\Sigma(K^{\times2}_\Delta)) \to \pi^d_{\Z_2}(K^{*2}_\Delta)$
is a 1--1 correspondence for $d\ge k+2$ and is surjective for $d\ge k+1$.
A $\Z_2$-equivariant map
$K^{*2}_\Delta\to S^d$ is {\it nice} if it maps the non-trivial preimages of $q$
to (the opposite) points of $S^d$.
For $d\ge k+1$ any map
$K\to S^d$ is null-homotopic, so any $\Z_2$-equivariant map
$K^{*2}_\Delta\to S^d$ is $\Z_2$-equivariantly homotopic to a nice map, so $q^*$ is surjective.
For $d\ge k+2$ any map
$\Sigma K\to S^d$ is null-homotopic, so
any $\Z_2$-equivariant homotopy
$K^{*2}_\Delta\times I\to S^d$ between nice maps is homotopic to a homotopy through nice maps, so $q^*$ is injective.
\end{proof}



\begin{proof}[Proof of Theorem \ref{t:eq}\aronly{.a}]
Apply Lemma \ref{l:steps}.a,d
for the complex $K*L$ embeddable into $\R^{d+q+1}$.
Since $(K*L)^{*2}_\Delta \cong K^{*2}_\Delta*L^{*2}_\Delta$ \cite[Lemma 5.5.2]{Ma03}, we obtain
a $\Z_2$-equivariant map $\alpha:K^{*2}_\Delta* L^{*2}_\Delta\to S^{d+q+1}$.
Then $\alpha\circ(\id*\varphi) :  K^{*2}_\Delta*S^q\rightarrow S^{d+q+1}$ is a $\Z_2$-equivariant map.
There is a $\Z_2$-equivariant homeomorphism $K^{*2}_\Delta* S^q\cong \Sigma^{q+1}K^{*2}_\Delta$.
Since $k<d$, by Lemma \ref{l:steps}.c the equivariant suspension
$$\Sigma^{q+1}:\pi_{\Z_2}^d(K^{*2}_\Delta) \to \pi_{\Z_2}^{d+q+1}(\Sigma^{q+1}K^{*2}_\Delta)$$
is surjective.
Hence $\pi_{\Z_2}^d(K^{*2}_\Delta)\neq \emptyset$.
Since $k<d$, by Lemma \ref{l:steps}.f it follows that $\pi_{\Z_2}^{d-1}(K^{\times 2}_\Delta)\neq \emptyset$.
So by Lemma \ref{l:steps}.b $K$ embeds into $\R^d$.
\end{proof}

\aronly{

\begin{proof}[Proof of Theorem \ref{t:eq}.b]
Since $f$ is level preserving, there is a map $f_0:K*L\to \R^{d+q}$ such that $f([x,y,t])=f_0([x,y,t])\times t$.
For any $(x,x')\in K^{\times 2}_\Delta$, $(y,y')\in L^{\times 2}_{\Delta}$ and $t\in[0,1]$ we have $f_0([x,x',t])\neq f_0([y,y',t])$.
Hence a $\Z_2$-equivariant map
$$\tilde f: K^{\times 2}_\Delta * L^{\times 2}_\Delta \rightarrow
S^{d+q-1}\quad\text{is defined by}\quad
\t f([(x,x'),(y,y'),t])=\frac{f_0([x,x',t])-f_0([y,y',t])}{|f_0([x,x',t])-f_0([y,y',t])|}.$$
Then $\t f\circ (\id*\psi):K^{\times 2}_{\Delta}*S^{q-1} \rightarrow S^{d+q-1}$ is a $\Z_2$-equivariant map.


There is a $\Z_2$-equivariant homeomorphism $K^{\times2}_\Delta* S^{q-1}\cong \Sigma^qK^{\times2}_\Delta$.
Since $k<d$, by the surjectivity part of Lemma \ref{l:steps}.c the equivariant suspension
$$\Sigma^q:\pi_{\Z_2}^{d-1}(K^{\times2}_\Delta) \to \pi_{\Z_2}^{d+q-1}(\Sigma^qK^{\times2}_\Delta)$$
is surjective.
Hence $\pi_{\Z_2}^{d-1}(K^{\times2}_\Delta)\ne\emptyset$.
So by Lemma \ref{l:steps}.b $K$ embeds into $\R^d$.
\end{proof}

}


\begin{remark}\label{r:histjo} (a) Our
\aronly{proofs of Theorems \ref{t:parsaj} and \ref{t:eq}.b are simpler than those from
\cite[proofs of $(iv)\Rightarrow(i)$ of Corollary 4.4 and of Theorem 4.5]{MS06},
\cite[Proof of Theorem C in \S5 and \S6]{Me22}}
\jonly{proof of Theorem \ref{t:parsaj} is simpler than that from
\cite[proof of $(iv)\Rightarrow(i)$ of Corollary 4.4]{MS06}, \cite[Proof of Theorem C in \S5 and \S6]{Me22}}
because we use equivariant maps instead of the obstruction $\Theta^d$ whose definition (even for polyhedra) requires several pages.
In particular, we use the Weber theorem (Lemma \ref{l:steps}.b) instead of its reformulation in terms of the obstruction $\Theta^d$
\cite[Theorem 6.3]{Me06}.
We explicitly use the deleted join which is more convenient for calculations than the deleted product.
Thus although our proofs are clearer and shorter, they are
not alternative proofs based on very different ideas.

(b) A particular case of \cite[Lemma 9]{BKK} for $K_2$ being the three-point set is a homological mod 2 version of the case $d=2k$ of Theorem \ref{t:parsaj} (see also discussion before Proposition 5 in \cite{BKK} of  the condition (3) from Definition 4 of \cite{BKK}).

(c) The paper \cite{Pa20} proves the case $d=2k$ of Theorem \ref{t:parsaj} by proving that changing $K$ to $K*[3]$ raises by 2 the so called {\it Smith index} of the deleted product.

\aronly{(d) Theorem \ref{t:eq}.a in a sense generalizes the following Grunbaum-van Kampen-Flores theorem \cite{Gr69}: {\it if $K_i$ is the $k_i$-skeleton of the $(2k_i+2)$-simplex, $i=1,\ldots,p$, then the join $P=K_1*\ldots*K_p$ does not embed into the Euclidean space $\R^{2\dim P}$, where $\dim P=\sum_i d_i +p-1$.}
The latter result follows from Theorem \ref{t:eq}.a for $k_1,\ldots,k_p>2$ by induction on $p$ because the deleted join $K_{n,\Delta}^{*2}\cong_{\Z_2} S^{2k_n+1}$ \cite[page 117]{Ma03}, \cite{Gr69}.
However, the original proof is much simpler.
Namely, analogously by induction on $p$ the deleted join of $P$ is a sphere and the non-embeddability follows from the Borsuk-Ulam theorem.

(e) The existence of a $\Z_2$-equivariant map $S^q\rightarrow L^{*2}_{\Delta}$ implies the existence of a $\Z_2$-equivariant map $S^{q-1}\rightarrow L^{\times2}_{\Delta}$.
This follows by Lemmas \ref{l:steps}.c,d and \cite[Exercise 4 to \S5.5]{Ma03}, cf. proof of Lemma \ref{l:steps}.f.}
\end{remark}

{\it Books, surveys and expository papers in this list are marked by the stars.}

\end{document}